\definecolor{maroon}{rgb}{0.5, 0.0, 0.0}
\DeclareUrlCommand{\bfurl}{}
\newtheorem {theorem}{Theorem}
\newtheorem {lemma}[theorem]{Lemma}
\newtheorem {definition}[theorem]{Definition}
\newtheorem {conjecture}[theorem]{Conjecture}
\newtheorem {problem}[theorem]{Problem}
\theoremstyle{remark}
\newtheorem {remark}[theorem]{Remark}
\numberwithin{equation}{section}
\numberwithin{theorem}{section}
\title[Plumbed Homology Spheres Bounding Contractible Manifolds and Homology Balls]{Classical and New Plumbed Homology Spheres Bounding Contractible Manifolds and Homology Balls}
\author{O{\u{g}}uz \c{S}avk}
\address{CNRS and Laboratorie de Math\'ematiques Jean Leray, Nantes Universit\'e, 44322 Nantes, France}
\email{\url{oguz.savk@cnrs.fr}, \url{oguz.savk@univ-nantes.fr}}
\urladdr{\url{https://sites.google.com/view/oguzsavk/}}
\date{}
\begin{document}

\begin{abstract}
A central problem in low-dimensional topology asks which homology $3$-spheres bound contractible $4$-manifolds or homology $4$-balls. In this paper, we address this question for plumbed $3$-manifolds and we present two new infinite families. We consider most of the classical examples from around the nineteen eighties by reproving that they all bound Mazur manifolds. We also show that several well-known families bound possibly different types of $4$-manifolds, called Po\'enaru homology $4$-balls. To unify classical and new results in a fairly simple way, we modify Mazur's argument and work with Po\'enaru manifolds.
\end{abstract}
\maketitle

\section{Introduction}
\label{intro}
Freedman's breakthrough work \cite{Fre82} expresses that every integral homology $3$-sphere bounds a topological contractible $4$-manifold. However, the smooth analogue of this implication produces an unresolved problem in low-dimensional topology.

\begin{problem}[Problem 4.2, \cite{K78b}] 
\label{prob}
Which integral homology $3$-spheres bound smooth contractible $4$-manifolds or smooth integral homology $4$-balls?\footnote{In this paper, we always work with integral homology $3$-spheres, smooth contractible $4$-manifolds, and smooth integral homology $4$-balls, hence we drop the \emph{integral}, \emph{smooth}, $3$- and $4$- prefixes.}
\end{problem}

Currently, there is no complete program for the resolution of this crucial problem. However, in the past four decades, a considerable amount of progress has been achieved by restricting to plumbed $3$-manifolds, and especially Seifert fibered homology spheres with three singular fibers. In general, Seifert fibered homology spheres can be uniquely realized as the boundaries of negative-definite, minimal (i.e. no more blow down on the branches), and unimodular plumbing graphs of $4$-manifolds. These graphs have one node (i.e., a vertex connecting more than two branches) with adjacent branches and the number of branches corresponds to the number of singular fibers. In this paper, we always work with minimal, negative-definite and unimodular plumbing graphs.

Several infinite families of Seifert fibered homology spheres with three singular fibers are known to bound contractible manifolds. After Kirby's celebrated work \cite{K78}, the classical articles appeared subsequently: Akbulut and Kirby \cite{AK79}, Casson and Harer \cite{CH81}, Stern \cite{S78}, Fintushel and Stern \cite{FS81}, Maruyama \cite{M81}, \cite{M82}, and Fickle \cite{F84}. In addition, some of these results were found independently of Kirby calculus, see Fukuhara \cite{F78} and Martin \cite{M79}. One can weaken Problem~\ref{prob} to ask which homology spheres bound rational homology balls. Surprisingly, the latter problem still remains difficult. For affirmative answers and constructions, see the papers \cite{FS84, AL18, S20, Sim20}. 

When the number of singular fibers increases, there is a bold conjecture for Seifert fibered homology spheres bounding homology balls first indicated by Fintushel and Stern \cite{FS87}, and explicitly stated by Koll\'ar \cite{K08}. Thus, the above plenty number of examples may seem sporadic.

\begin{conjecture}[Three Fibers Conjecture\footnote{The proposed name for the conjecture comes from an e-mail correspondence with Ronald Fintushel.}]
\label{conj}
A Seifert fibered homology sphere with more than three singular fibers cannot bound a homology ball.
\end{conjecture}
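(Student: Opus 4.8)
Since this is the long-standing Three Fibers Conjecture, I can only propose a strategy rather than a complete argument; the natural line of attack is through gauge-theoretic and Heegaard Floer obstructions that are both homology-cobordism invariant and computable from the plumbing data. I aim for the stronger statement that $\Sigma = \Sigma(a_1,\dots,a_n)$ with $n \geq 4$ bounds no rational homology ball, which a fortiori rules out integral homology balls and contractible fillings. The plan is to fix the canonical negative-definite, star-shaped plumbing graph $\Gamma$ with $n$ branches realizing $\Sigma$, and to suppose $\Sigma = \partial W$ for a rational homology ball $W$. Gluing gives a closed, oriented, smooth $4$-manifold $X = P(\Gamma) \cup_{\Sigma} (-W)$ whose intersection form is negative definite and into which the plumbing lattice embeds; by Donaldson's diagonalization theorem this form is standard diagonal over $\mathbb{Z}$, so the plumbing lattice embeds into the standard negative diagonal lattice. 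This already eliminates many star-shaped graphs, but because such lattices frequently do embed, Donaldson's theorem alone cannot detect the number of branches and is insufficient.

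The second ingredient is the graded Floer-theoretic data. A rational homology ball forces the Ozsv\'{a}th--Szab\'{o} correction term to vanish, $d(\Sigma) = 0$, and via the N\'{e}methi and Ozsv\'{a}th--Szab\'{o} plumbing algorithm $d(\Sigma)$ is computable directly from $\Gamma$. To separate $n = 3$ from $n \geq 4$ I would combine $d$ with the Neumann--Siebenmann invariant $\bar{\mu}$ and, crucially, with the Fintushel--Stern $R$-invariant and the Fr\o{}yshov invariant, which measure the instanton/monopole count attached to the flat connections on $\Sigma$. The guiding idea is that the dimension of the space of flat $SU(2)$ connections on a Seifert fibered homology sphere grows with the number of singular fibers, so one expects a gauge-theoretic quantity reading off the Seifert data $(a_1,\dots,a_n)$ that is a genuine homology-ball obstruction and is forced to be nonzero precisely when $n \geq 4$.

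The main obstacle is that no single classical invariant vanishes on the entire class: the Casson invariant, the correction term $d$, and $\bar{\mu}$ can each be arranged to vanish for specific four-fiber families, so the argument cannot rest on any one of them. The genuine difficulty -- and the reason the conjecture is still open -- is to produce one monotone quantity, presumably a refinement of the Fr\o{}yshov or Fintushel--Stern gauge-theoretic invariant, that is simultaneously an obstruction to bounding a rational homology ball and provably nonzero for every $n \geq 4$. I therefore expect the decisive step to be a uniform positivity estimate: reducing the obstruction, through a Casson-type surgery formula in the Seifert invariants, to a purely number-theoretic inequality among the $a_i$ and the central framing, and then controlling the contribution of the central node as the number of branches increases. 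Establishing that inequality in full generality, rather than case by case, is where I expect essentially all the work to concentrate.
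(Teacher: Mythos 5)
You have correctly recognized that this statement is the open Three Fibers Conjecture, and the first thing to say is that the paper contains no proof of it either: it is stated as a conjecture, with only the known partial result recorded, namely that the Fintushel--Stern $R$-invariant \cite{FS85} combined with the Neumann--Zagier computation \cite{NZ85} rules out homology-ball fillings whenever the central node has weight different from $-1$ (and by \cite{IM18} this even obstructs rational homology balls). So there is no argument in the paper to compare yours against, and your proposal, by your own admission, is a research program rather than a proof: the entire decisive step --- the ``uniform positivity estimate'' showing that some homology-cobordism invariant is nonzero for every Seifert fibered homology sphere with $n\geq 4$ fibers --- is exactly the open problem restated, not an argument. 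The concrete gap is therefore total: the reduction you envision (a Casson-type surgery formula yielding a number-theoretic inequality in the $a_i$) is not carried out, and no candidate invariant you list is shown, or known, to have the required behavior. Note also that after Neumann--Zagier the only open case has central weight $-1$, so any viable strategy must be aimed precisely at that stratum, which your sketch does not isolate.

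One structural flaw in the strategy is worth naming explicitly, because the paper itself supplies the counterexamples. The conjecture fails for plumbed homology spheres that are not Seifert fibered: the manifolds $\partial X(n)$, $\partial X'(n)$ (Maruyama \cite{M82}, Akbulut--Karakurt \cite{AK14}) and $\partial W(n)$ of this paper have plumbing graphs with two or three nodes and five or seven branches, yet bound contractible manifolds. Consequently, any obstruction that is ``monotone in the complexity of the graph,'' or that works purely at the level of the negative-definite plumbing lattice --- in particular your Donaldson-diagonalization/lattice-embedding step and any quantity that merely counts branches --- must vanish on these families and so cannot separate $n=3$ from $n\geq 4$ by combinatorial size alone; the argument has to use Seifert-fiberedness (equivalently, the star-shaped structure with a single node) in an essential way. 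Your individual technical assertions are mostly sound as stated: $d(\Sigma)=0$ is indeed forced for an integral homology sphere bounding a rational homology ball, the glued manifold $P(\Gamma)\cup_{\Sigma}(-W)$ is closed and negative definite, and the flat $SU(2)$ moduli space does grow with the number of fibers. But each of $d$, $\bar\mu$, and the Casson invariant is already known to be insufficient, as you note, and assembling them into a single obstruction that is provably nonzero for all $n\geq 4$ is where the conjecture lives; nothing in the proposal advances that step.
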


The main obstruction comes from the Fintushel-Stern $R$-invariant \cite{FS85}. There are other important obstructions; for instance, Rokhlin $\mu$-invariant \cite{R52} and Ozsv\'ath-Szab\'o $d$-invariant \cite{OS03}. Together with the short-cut of Neumann and Zagier \cite{NZ85}, we know that Seifert fibered homology spheres with the node different than minus one weight do not bound homology balls. This condition is even enough for the obstruction from bounding rational homology balls, see the article of Issa and McCoy \cite{IM18}.

From the perspective of $4$-dimensional handlebodies, the simplest contractible manifolds after the $4$-ball $B^4$ are \emph{Mazur manifolds} \cite{M61}. They are contractible manifolds obtained by attaching a single $1$- and $2$-handle to $B^4$. In this paper, we provide a modification of Mazur's argument by increasing the number of $1$- and $2$-handles through the eyes of knot theory. In the original construction, we change the role of the unknot with any ribbon knot; therefore, we call the resulting spaces \emph{Po\'enaru manifolds} \cite{P60}, see Section~\ref{ihb} for details.

We first notice that Conjecture~\ref{conj} cannot be generalized for plumbed homology spheres that are not Seifert fibered. The first examples were provided by Maruyama \cite{M82}, independently refound by Akbulut and Karakurt \cite{AK14}. These manifolds are not Seifert fibered\footnote{\label{xxx} Note that $\partial X(1) = \Sigma(2,5,7)$ and $\partial X'(1) = \Sigma(3,4,5)$, compare with \cite{AK79}, \cite{CH81}, and \cite{S20}.} unless $n=1$ and they can be realized as the boundary of a plumbing graph with two nodes and five branches, and are reproven via our approach.

\begin{theorem}[Maruyama, Akbulut-Karakurt]
\label{maruyama}
Let $X(n)$ be the Maruyama plumbed $4$-manifold in the left-hand side of Figure~\ref{fig:plumb}. Then for each $n\geq 1$, its boundary $\partial X(n)$ is a homology sphere which bounds a Mazur type contractible manifold with one $0$-handle, one $1$-handle and one $2$-handle.
\end{theorem}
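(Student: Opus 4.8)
The plan is to produce the contractible filling explicitly, using Kirby calculus to simplify the standard surgery diagram of $\partial X(n)$ until it visibly bounds a Mazur-type handlebody. First I would translate the plumbing graph $X(n)$ of Figure~\ref{fig:plumb} into a framed link in $S^3$ in the usual way: each vertex becomes an unknot whose framing is the corresponding weight, and each edge becomes a Hopf clasp between the two unknots it joins. Before simplifying anything, I would record that $\partial X(n)$ is a homology sphere by checking that the plumbing (linking) matrix has determinant $\pm 1$; since the graph is negative-definite and unimodular, this is a finite determinant computation, and carrying the parameter $n$ through symbolically should yield $\pm 1$ for every $n \geq 1$.

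Second, I would run the \emph{folklore reduction} on this diagram: repeatedly blow down $\pm 1$-framed unknots, slide handles, and twist strands so as to collapse the two nodes and the five branches. The goal is to arrive at a normal form consisting of a single $1$-handle and a single $2$-handle — a dotted unknot together with one framed knot running over it — with the parameter $n$ surviving only as a clasp or twist parameter in the framed knot. This is the single move that is supposed to recover all the classical families at once: once the diagram is in this form, the $4$-manifold it describes is assembled from one $0$-handle, one $1$-handle, and one $2$-handle, and I will call it $W(n)$.

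Third, I would read contractibility off this normal form. Since $\partial X(n) = \partial W(n)$ by construction, it remains to show $W(n)$ is contractible. If the $2$-handle runs over the $1$-handle algebraically $\pm 1$, then $W(n)$ is already a homology ball; to upgrade this to contractibility I would write down the one-relator presentation of $\pi_1(W(n))$ coming from the single $2$-handle attachment and check that the relator trivializes the group, exactly as in the classical Mazur and Akbulut--Kirby examples where the $2$-handle meets the $1$-handle geometrically twice but algebraically once. A trivial $\pi_1$ together with $H_\ast = H_\ast(\mathrm{pt})$ then gives a contractible $W(n)$ by the usual argument, completing the proof.

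The main obstacle is the bookkeeping in the second step. I must choose a sequence of blow-downs, handle slides, and twists that simplifies the two-node, five-branch graph \emph{uniformly} in $n$ rather than case by case, and then verify that the surviving relator genuinely dies for every $n$. Keeping the reduction honest for all $n$ at once — so that the single normal form specializes correctly to the Seifert fibered case $\partial X(1) = \Sigma(2,5,7)$ and to the non-Seifert fibered cases $n > 1$ simultaneously — is where the real work lies; by contrast, the homology-sphere determinant check and the homology-ball handle count are routine once the diagram is in hand.
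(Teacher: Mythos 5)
Your plan is essentially the paper's own proof: the paper likewise verifies that the plumbing matrix is unimodular and then blows the surgery diagram down to exhibit $\partial X(n)$ as $(-1)$-surgery on a knot in $Y$, the $0$-surgery on the unknot --- which is precisely your dotted-circle-plus-one-$2$-handle normal form --- concluding via the folklore Lemma~\ref{integral}, with the figures carrying out uniformly in $n$ the blow-down bookkeeping you defer (and the case $n=1$ quoted from \cite{S20}). One remark: your final step is automatic rather than something to check, since with a single $1$-handle one has $\pi_1 = \mathbb{Z}$, which is abelian, so a relator of exponent sum $\pm 1$ necessarily trivializes the group; in this normal form, being a homology ball and being contractible coincide, which is exactly the content of the lemma you are implicitly reproving.
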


\begin{figure}[ht]
\begin{center}
\includegraphics[width=0.7\columnwidth]{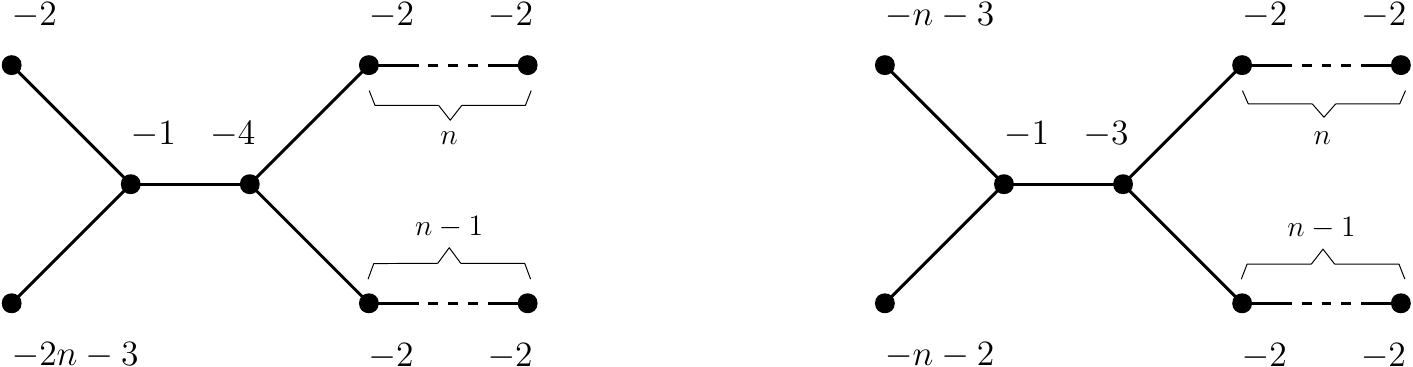}
\end{center}
\caption{Maruyama plumbed $4$-manifold $X(n)$ and its companion $X'(n)$.}
\label{fig:plumb}
\end{figure}

Next, we present a new infinite family with the same number of nodes and branches, which are not Seifert fibered\footref{xxx} again unless $n=1$.

\begin{theorem}
\label{maruyama2}
Let $X'(n)$ be the companion of Maruyama plumbed $4$-manifold in the right-hand side of Figure~\ref{fig:plumb}. Then for each $n\geq 1$, its boundary $\partial X'(n)$ is a homology sphere which bounds a Mazur type contractible manifold with one $0$-handle, one $1$-handle and one $2$-handle.
\end{theorem}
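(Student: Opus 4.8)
The plan is to follow the same folklore strategy used for Theorem~\ref{maruyama}, now applied to the companion family $X'(n)$. First I would interpret the right-hand plumbing graph in Figure~\ref{fig:plumb} as a framed link: each vertex becomes an unknot with framing equal to its weight, and two unknots are Hopf-linked precisely when the corresponding vertices are joined by an edge. Before constructing the filling, I would confirm that $\partial X'(n)$ is a homology sphere by checking that the intersection form of $X'(n)$ --- that is, the weighted adjacency matrix of the graph --- has determinant $\pm 1$; this is a finite linear-algebra computation whose value should be independent of $n$ (or at worst $\pm 1$ for every $n$), and it simultaneously certifies that any $4$-manifold built with the same boundary and with vanishing reduced second homology is automatically a homology ball.

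The heart of the argument is a sequence of Kirby moves converting this framed-link picture into a Mazur-type handlebody $W(n)$. Concretely, I would repeatedly blow down $\pm 1$-framed unknots, slide handles, and convert $0$-framed unknots to dotted circles (the $1$-handle/$2$-handle exchange), steering the diagram toward a normal form consisting of a single dotted circle (one $1$-handle) together with a single framed knot (one $2$-handle) whose attaching curve passes over the dotted circle algebraically once. Because the family is indexed by $n$, the key organizing idea is to isolate the part of the graph carrying the $n$-dependence --- presumably a linear chain inside one of the branches --- and reduce it by an inductive or periodic pattern of blow-downs, so that one uniform sequence of moves handles all $n \geq 1$ simultaneously. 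The expected endpoint is the manifold $W(n)$ obtained from $B^4$ by attaching exactly one $1$-handle and one $2$-handle.

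Granting that reduction, contractibility is immediate in principle. Since the $2$-handle meets the $1$-handle algebraically once we have $\widetilde{H}_\ast(W(n)) = 0$, and $\pi_1(W(n))$ is generated by the single $1$-handle generator $x$ subject to the lone relation read off from the attaching curve of the $2$-handle. I would then verify that this relation forces $x = 1$, so that $W(n)$ is simply connected; a simply connected homology ball is contractible by the Hurewicz and Whitehead theorems. As every Kirby move preserves the boundary, $\partial W(n) = \partial X'(n)$, which completes the proof.

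The main obstacle is the middle step: producing a single, $n$-uniform sequence of blow-downs and handle slides that collapses a two-node, five-branch plumbing, rather than merely checking a filling that is already in hand. The five branches together with the two nodes make the intermediate diagrams genuinely intricate, and one must ensure that the $n$-dependent chain reduces cleanly without leaving residual linking that blocks passage to the one-$1$-handle/one-$2$-handle normal form; reading the chain of the companion $X'(n)$ carefully against the known reduction for $X(n)$ should guide the bookkeeping. A secondary subtlety is correctly extracting the final fundamental-group relation, since an error there would leave $\pi_1$ nontrivial and yield only a homology ball instead of a contractible manifold.
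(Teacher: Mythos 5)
Your plan is, in substance, the paper's own proof. The paper first checks, via the combinatorial recipe of \cite[Chapter 5.21]{EN85}, that the intersection matrix of the plumbing has determinant $-1$ for every $n \geq 1$, matching your linear-algebra step. For the filling it then argues along your lines but packages the endgame through Lemma~\ref{integral}: for $n \geq 2$ an explicit sequence of blow-downs (Figure~\ref{fig:x1}) exhibits $\partial X'(n)$ as $(-1)$-surgery on a knot in $Y$, where $Y$ is $0$-surgery on the unknot, and Lemma~\ref{integral} then supplies the contractible filling; the case $n=1$ is referred to the proof of \cite[Theorem~1.2]{S20}. Your target normal form --- one dotted circle plus one $2$-handle with algebraic linking one --- is the same object in different clothing: trading the $0$-framed unknot of $Y$ for a dotted circle realizes the slice-disk exterior $S^1 \times B^3$ of the unknot, and the remaining $(-1)$-framed knot is the single $2$-handle, so reaching the paper's intermediate diagram already suffices and you need not push all the way to your normal form. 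One remark: your ``secondary subtlety'' about extracting the fundamental-group relation is vacuous. With a single $1$-handle the group before attaching the $2$-handle is free on one generator $x$, and any word in one generator with exponent sum one equals $x$ itself; simple connectivity is therefore automatic from the algebraic count, which is exactly why Mazur's argument (and Lemma~\ref{integral}) requires no further verification.

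That said, the step you yourself flag as the main obstacle is a genuine gap, and it is the entire nontrivial content of the theorem: you never produce the $n$-uniform sequence of blow-downs collapsing the two-node, five-branch graph, and without it the argument establishes only the (correct) expectation that such a sequence should exist. The paper's proof consists essentially of that explicit sequence, drawn out for general $n$ in Figure~\ref{fig:x1}, together with the separate reference for $n=1$; your proposal identifies the right strategy, the right normal form, and the right homological bookkeeping, but leaves the execution --- the actual Kirby calculus --- open.
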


We exhibit one more new infinite family by increasing the complexity of the graph whose plumbing has three nodes and seven branches.

\begin{theorem}
\label{main}
Let $W(n)$ be the Ramanujam plumbed $4$-manifold in Figure~\ref{fig:plumb2}. Then for each $n\geq 1$, its boundary $\partial W(n)$ is a homology sphere which bounds a Po\'enaru type homology ball with one $0$-handle, two $1$-handles and two $2$-handles.
\end{theorem}

\begin{figure}[ht]
\begin{center}
\includegraphics[width=0.45\columnwidth]{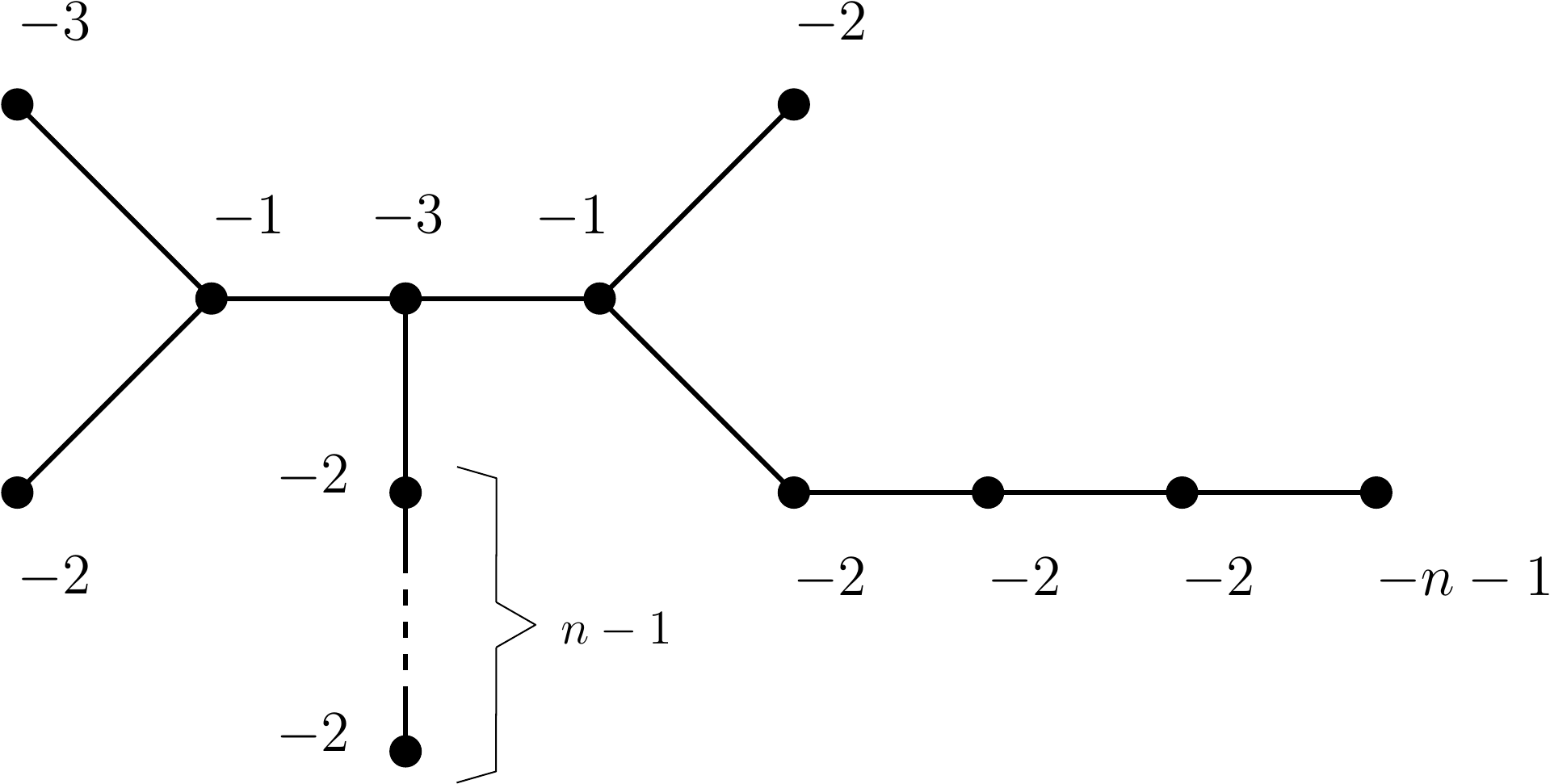}
\end{center}
\caption{Ramanujam plumbed $4$-manifold $W(n)$.}
\label{fig:plumb2}
\end{figure}

The case $n=1$ of our theorem interestingly appeared as a counter-example in the influential article of Ramanujam \cite{R71} before the Kirby calculus was introduced. This was the first example of a complex affine surface that is contractible and not analytically isomorphic to $\mathbb{C}^2$. This also provided the existence of the first exotic algebraic structure on $\mathbb{C}^3$ since $W(1) \times \mathbb{C}$ is not diffeomorphic to $\mathbb{C}^3$. For further directions about this subject, see the papers \cite{M80}, \cite{Z98} and \cite{SS05}. To the author's knowledge, this was also the first example of plumbed homology spheres bounding contractible manifolds.

We reprove most of the classical results about Seifert fibered homology spheres bounding contractible manifolds. Such spheres coincide with Brieskorn homology spheres $\Sigma(p,q,r)$ which are the links of singularities $x^p + y^q + z^r=0$. The following families were from the lists of Casson and Harer \cite{CH81}. The special cases of their list were also independently exhibited in the papers of Martin \cite{M79} and Maruyama \cite{M81}.

\begin{theorem}[Fukuhara, Maruyama, Martin, Casson-Harer]
\label{classical}
Let $p$ and $s$ be integers such that $p$ is odd. Then each Seifert fibered homology sphere $\Sigma(p,ps+1,ps +2)$ and $\Sigma(p,ps-2,ps -1)$ bounds a Mazur type contractible manifold with one $0$-handle, one $1$-handle and one $2$-handle.
\end{theorem}

The Seifert fibered homology spheres $\Sigma(2,3,13)$ and $\Sigma(2,3,25)$ were known to bound Mazur type contractible manifolds, see \cite{AK79} and \cite{F84}. Fickle proved that Stern's examples \cite{S78} bound Mazur type contractible manifolds as well. Our current perspective indicates that they also bound Po\'enaru manifolds.

\begin{theorem}
\label{classical2}
The following Seifert fibered homology spheres bound Po\'enaru type homology balls with one $0$-handle, two $1$-handles and two $2$-handles: $\Sigma(2,3,13)$, $\Sigma(2,3,25)$, $\Sigma(2,4n+1,20n+7)$, $\Sigma(3,3n+1,21n+8)$, $\Sigma(2,4n+3,20n+13)$, and $\Sigma(3,3n+2,21n+13)$ for each $n\geq 1$.
\end{theorem}

Therein, our approach simply unifies the proofs of classical and new examples of plumbed homology spheres bounding contractible manifolds and homology balls. All these examples start with the only ribbon twist knots \cite{CG86}, namely the unknot and the stevedore knot, except new manifolds presented in the paper. Their initial ribbon knot is the square knot. Note that the proofs for $\Sigma(2,3,13)$ and $\Sigma(2,3,25)$ varied from those found in \cite{S20}. Here, we cannot use the surgery description of the manifolds $\Sigma(2,3,6n+1)$ in terms of the twist knots. Using similar additional handle attachments, one can reprove that $\Sigma(2,3,7)$ and $\Sigma(2,3,19)$ bound rational homology balls, compare with \cite[Proposition 3]{AL18}. Since the proofs are addressed to the dual approach, newly constructed manifolds may or may not be diffeomorphic to classical ones. This is currently unknown to the author, but is highly unlikely to be the case for most of them.

For homology spheres, bounding homology balls is equivalent to being homology cobordant to the $3$-sphere $S^3$. Thus, they represent the trivial element in the \emph{homology cobordism group} $\Theta^3_\mathbb{Z}$. On the other hand, the algebraic structure of $\Theta^3_\mathbb{Z}$ is very complicated. The pioneering work of Dai, Hom, Stoffregen, and Truong shows that $\Theta^3_\mathbb{Z}$ admits a $\mathbb{Z}^\infty$ summand \cite{DHST18}, for more examples see \cite{KS22}. For the crucial role of $\Theta^3_\mathbb{Z}$ in topology, one may consult the references \cite{Sav02, M18, S22}.

\subsection*{Organization}
The structure of the paper is as follows. In Section~\ref{ihb}, we give the preliminaries about the essential tools for the proofs of theorems. In Section~\ref{proof}, we describe the blow down procedures for surgery diagrams of plumbed homology spheres. We first present the proof of Theorem \ref{main}. Then the proofs of Theorem \ref{maruyama} and Theorem \ref{maruyama2} are given together. Finally, we jointly prove Theorem \ref{classical} and Theorem \ref{classical2}, based on the previous work \cite{S20}. 

\subsection*{Acknowledgements}
The author would like to thank Selman Akbulut, Ronald Fintushel, Tye Lidman, Nikolai Saveliev, Jonathan Simone, Ronald Stern, and B\"ulent Tosun for valuable comments. The author is especially grateful to Marco Golla for his close interest and helpful suggestions, to his advisor \c{C}a\u gr{\i} Karakurt for his guidance and support, to Noriko Maruyama for sharing the scans of her papers, and to Bar\i \c{s} Parlatangiller for his generous help about drawings of handle diagrams. Finally, the author would like to thank the anonymous referee for their careful review. They found a gap in Lemma~\ref{integral} which is discussed in detail in Remark~\ref{rem1} and Remark~\ref{rem2}.
\section{Plumbings, Slice and Ribbon Knots, and Contractible Manifolds}
\label{ihb}

\subsection{Plumbings}
\label{plumbeds}
A \emph{plumbing graph} $G$ is a weighted tree such that each vertex $v_i$ is decorated by an integer $e_i$. A plumbing graph $G$ gives rise to a simply connected smooth $4$-manifold with boundary $X(G)=X$ which is obtained by plumbing together a collection of the $D^2$-bundles over $S^2$. Thus, the Euler number of the $D^2$-bundle corresponding to the vertex $v_i$ is given by $e_i$, see the recipe in \cite[Section 1.1.9]{Sav02}. Let $Y(G)=Y$ be the $3$-manifold which is the boundary of $X$. Then $Y$ is said to be a \emph{plumbed $3$-manifold}.

The straight lines between any two vertices resulted in the plumbing process form \emph{edges} and a linear plumbing graph having at least one edge is called a \emph{branch}. Vertices with at least three adjacent branches are called \emph{nodes}. 

Let $\vert G \vert$ denote the number of vertices of a plumbing graph $G$. The second homology group $H_2(X, \mathbb{Z})$ of the plumbed $4$-manifold $X$ is generated by vertices of $G$, and the intersection form on $H_2(X, \mathbb{Z})$ is given by the adjacency matrix 
\[ I=(a_{ij})_{i,j \in \{1,\ldots, \vert G \vert \} } \ \ \ \text{where} \ \ a_{ij} = \begin{cases} 
      e_i, & \text{if} \ v_i=v_j, \\
      1, & \text{if} \ v_i \ \text{and} \ v_j \ \text{are connected by one edge}, \\
      0, & \text{otherwise}. 
   \end{cases} \]

The plumbed $3$-manifold $Y$ is called a \emph{plumbed homology sphere} if and only if the determinant of the intersection matrix $I$ is  $\pm 1$. In this case, the corresponding plumbing graph is said to be \emph{unimodular.} When the signature of $I$ is equal to the minus of number of vertices, a plumbing graph is called \emph{negative-definite}. 

\subsection{Slice and Ribbon Knots, Contractible Manifolds and Homology Balls}

Independently in the early 1960s, Mazur \cite{M61} and Po\'enaru \cite{P60} provided the first constructions of contractible $4$-manifolds with non-trivial homology $3$-sphere boundaries (i.e. they are not diffeomorphic to the $3$-sphere $S^3$).

A \emph{Mazur manifold} is a contractible $4$-manifold built with one $0$-handle $B^4$, one $1$-handle $B^1 \times B^3$, and one $2$-handle $B^2 \times B^2$, and the algebraic intersection number of $1$- and $2$-handle is $1$ up to sign. Thus, a Mazur manifold has the simplest handlebody description for a contractible $4$-manifold other than $B^4$, i.e., a single $0$-handle. Note that the union of a $0$-handle and a $1$-handle results in $S^1 \times B^3$. We can think the latter manifold as the unknotted disk exterior of $B^4$, i.e., $B^4 \setminus \nu(D_U)$ where $U$ is the unknot in $S^3$.

We can reinterpret Mazur's original construction using ribbon knots. If a knot in $S^3$ bounds a smoothly properly embedded disk in $B^4$, then this knot and the corresponding disk are called \emph{a slice knot} and \emph{a slice disk}, respectively. Using Morse theory, we can isotope any slice disk such that their level sets with respect to the radial function on $B^4$ are links, except in the case of singularities. Moreover, we have three types of singularities, namely minima, saddles, and maxima, corresponding to $0$-, $1$-, and $2$-handles of $2$-dimensional slice disks. 

If a slice disk does not contain $2$-handles, then we call it a \emph{ribbon disk}, and the corresponding knot is said to be a \emph{ribbon knot}. We call the minimum number of $1$-handles for a ribbon disk the \emph{fusion number}. Since the Euler characteristic of a disk is $1$, the fusion number completely determines the number of $0$-handles for ribbon disks, which is equal to the fusion number plus $1$. The basic examples of ribbon knots are the stevedore knot and the square knot (i.e. connected sum of the left- and right-handed trefoils).

Now, we can change the role of the unknot in Mazur's construction with any ribbon knot to construct new manifolds. We attach a single $2$-handle to a ribbon disk exterior to obtain a \emph{Po\'enaru type homology ball}. Therefore, a Po\'enaru manifold is a natural generalization of a Mazur manifold.

\begin{lemma}
\label{integral}
Let $Y$ be the $3$-manifold obtained by $0$-surgery on a ribbon knot in $S^3$ of fusion number $n \geq 1$. Then any homology sphere obtained by an integral surgery on a knot in $Y$ bounds a Po\'enaru type homology ball with one $0$-handle, $n+1$ $1$-handles, and $n+1$ $2$-handles. Moreover, if the initial ribbon knot is the unknot, then the resulting homology sphere bounds a Mazur type contractible manifold with one $0$-handle, one $1$-handle, and one $2$-handle.
\end{lemma}

\begin{proof}

Let $K$ be a ribbon knot in $S^3$ of fusion number $n \geq 0$ and let $D$ be a ribbon disk in $B^4$ with $\partial D = K$ . Consider the ribbon disk exterior $X = B^4 \setminus \nu(D)$. Then we have $\partial X = Y$ since $\nu(D)$ is diffeomorphic to $D \times B^2$. In addition, the ribbon disk exterior $X$ has a handle decomposition with one $0$-handle, $n+1$ $1$-handles, and $n$ $2$-handles, see \cite[Section~6.2]{GS99}.

An integral surgery on any $J \subset Y$ corresponds to attaching a $k$-framed $2$-handle $B^2 \times B^2$ to $X$ along $J$, and produces a new $4$-manifold $W$. Here, we can place $J$ in $S^1 \times D^2 \subset Y$, so the framing of the additional $2$-handle is fixed due to Akbulut's carving notion, see \cite[Section~1.4]{A16}. 

Let $Y'$ be the homology sphere obtained by an integral surgery on $J \subset Y$. Assume that the additional $2$-handle wraps $m$ times around the ribbon disk exterior $X$. Since $Y'$ is a homology sphere, the determinant of the linking matrix is $0 \cdot k - m^2 = \pm 1$, so $m$ must be $\pm 1$. Thus, $W$ is always a homology ball since $H_1 (W) = 0$. 

If the fusion number $n$ is zero, then $K$ is the unknot, and in this case $J$ is freely homotopic to the meridian of $K$. Therefore, by van Kampen's theorem, $W$ is a simply-connected homology ball, hence a Mazur type contractible manifold by the classical theorems of Hurewicz and Whitehead.
\end{proof}

\begin{remark}
\label{rem1}
The resulting Po\'enaru type homology ball \emph{might not be} contractible: Consider the ribbon disk exterior for the square knot. Then it has the fundamental group given by the fundamental group of the trefoil knot exterior. Now, one attaches the additional $2$-handle along a knot $J \subset Y$ which represents meridian times longitude in $\pi_1 (X)$. Then $W$ is a homology ball with the fundamental group equal to that of $(+1)$-surgery on the trefoil. Therefore, $\pi_1 (W)$ is either the binary icosahedral group or a central $\mathbb{Z}$-extension of the $(2,3,7)$ triangle group, depending on the choice of the meridian-longitude pair. These groups are finite and infinite respectively, see for instance Milnor's article \cite{Mil75}. 
\end{remark}

\begin{remark}
\label{rem2}
The resulting Po\'enaru type homology ball \emph{might be} contractible: if $J$ is the meridian of the knot $K$, then they together form $(1/k)$-surgery along $K$ in $S^3$, then $W$ with $\partial W = S^3_{1/k} (K)$ is always a contractible manifold by the classical result of Gordon \cite{Gor75}. However, an integral surgery on a random knot $J$ in $S_0^3 (K)$ may still produce $S^3_{1/k} (K)$, and in this case $J$ may not be homotopic to the meridian of $K$ or may not normally generate $\pi_1 (X)$.
\end{remark}

\begin{remark}
Presumably, there would be handle cancellations in the handle decomposition of a ribbon disk exterior, or a Po\'enaru manifold. Therefore, we may address contractible manifolds having fewer numbers of $1$- and $2$-handles. 
\end{remark}

Our next ingredient is the following trick of Akbulut and Larson coming from the proof of their main theorem \cite[Theorem~1]{AL18}. 

\begin{definition}
\normalfont The \emph{Akbulut-Larson trick} is an observation about describing the iterative procedure for passing from the surgery diagram of a homology sphere to a consecutive one by using a single blow up with an isotopy, see Figure~\ref{fig:trick}.

\begin{figure}[ht]
\centering
\includegraphics[width=0.7\columnwidth]{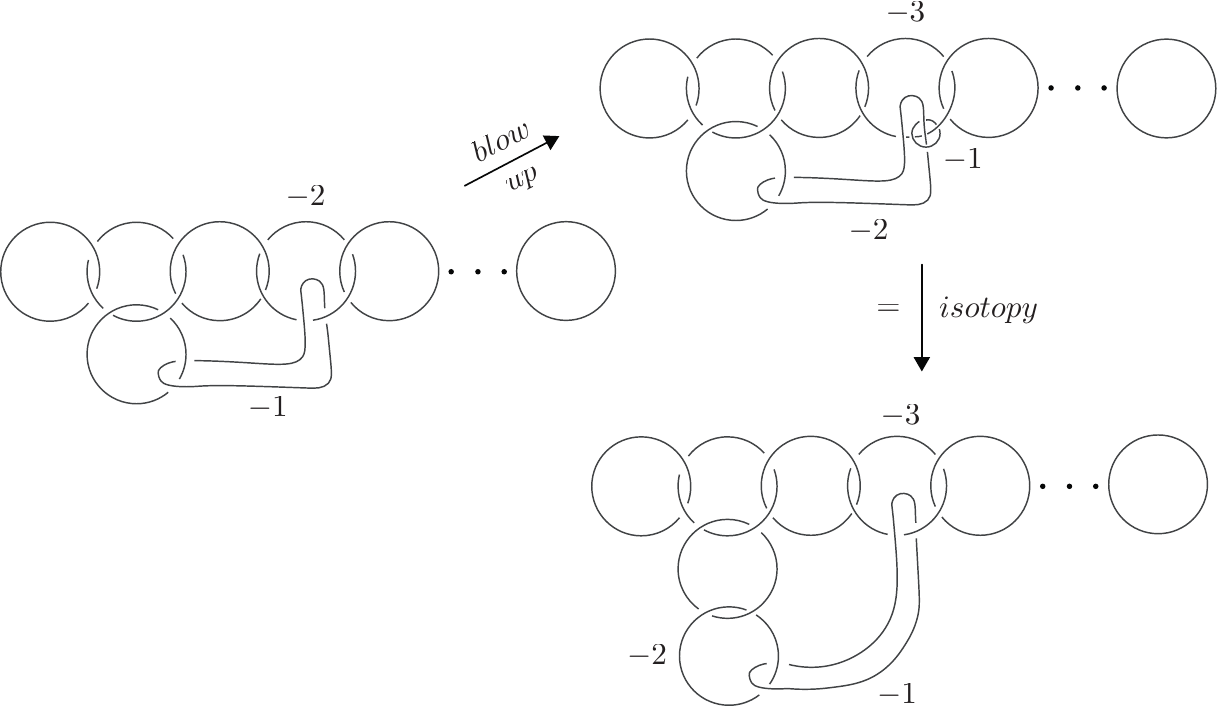}
\caption{The Akbulut-Larson trick.}
\label{fig:trick} 
\end{figure}

\end{definition}

\section{Proofs of Theorems}
\label{proof}
In this section, we prove all our theorems using the background in Section~\ref{ihb}. We shall start with the proof of Theorem~\ref{main}.

\begin{proof}[Proof of Theorem~\ref{main}]
Due to the combinatorial approach in \cite[Chapter 5.21]{EN85}, it can be easily checked that the determinant of intersection matrix associated to the plumbing graph for $W(n)$ is $\pm 1$ according to the parity of $n$, so $\partial W(n)$ is a homology sphere for each $n \geq 1$. Also since the plumbing graph for $W(n)$ has three nodes and seven branches, it is not a Seifert fibered homology sphere, see \cite[Chapter 2.7]{EN85}.

Here, we use the dual approach by giving integral surgeries from its plumbing graph displayed in Figure~\ref{fig:plumb2}. We actually show that $\partial W(n)$ is obtained by $(-1)$-surgery on a knot in $Y$ where $Y$ is $0$-surgery on the square knot. Its surgery diagram corresponding to the case $n=1$ appears in Figure~\ref{fig:w}. The dark black $(-1)$-framed component gives the necessary integral surgery to $Y$ after applying blow downs several times. Then the general family is obtained by applying the Akbulut-Larson trick successively. Therefore, we finish the proof by using Lemma~\ref{integral} since the blow down operation does not change the boundary $3$-manifold.

\begin{figure}[ht]
\vskip\baselineskip
\centering
\includegraphics[width=0.8\columnwidth]{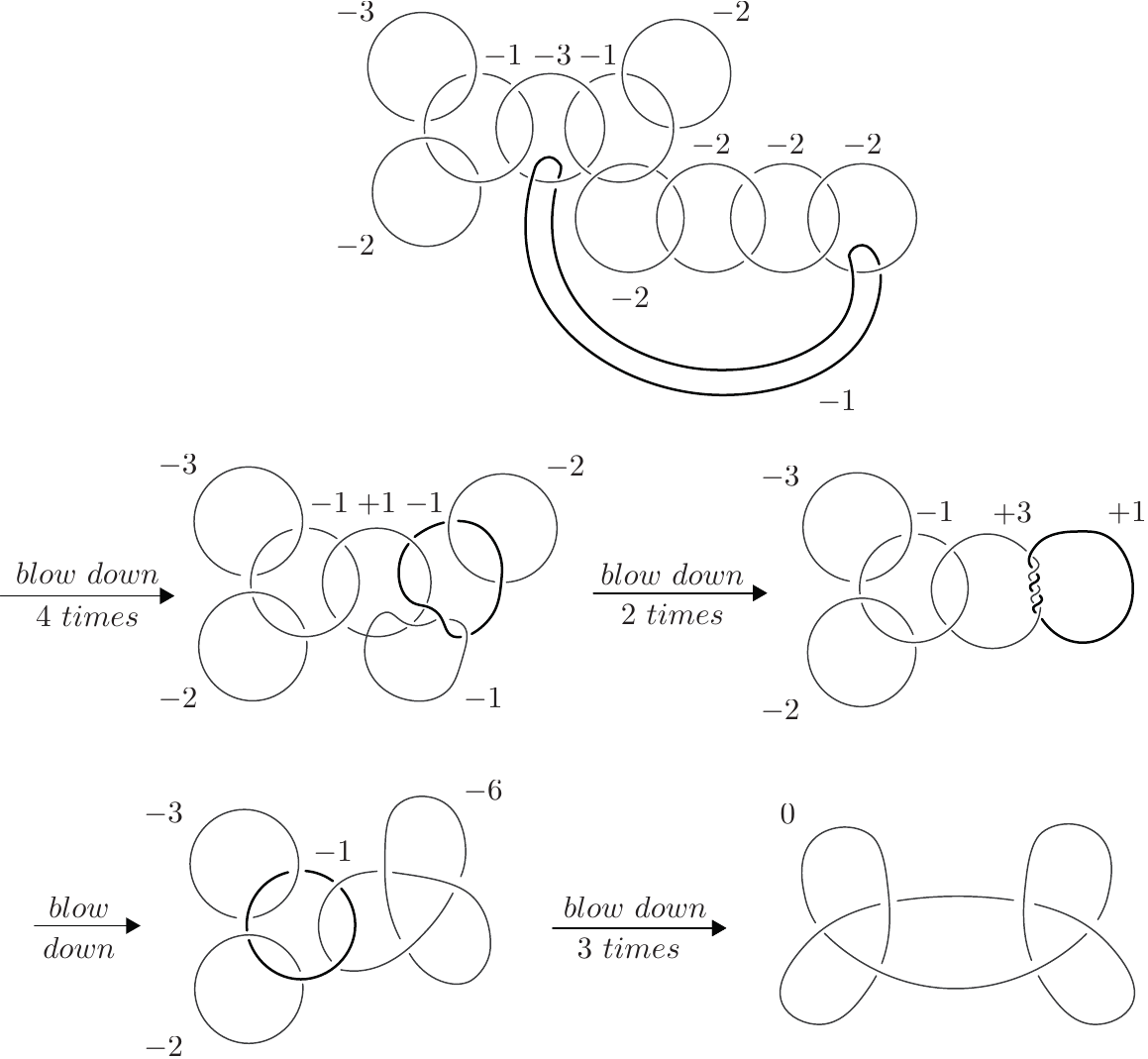}
\caption{$(-1)$-surgery from $\partial W(1)$ to $Y$.}
\label{fig:w}
\end{figure}

\end{proof}

Our next proof is about the boundary of the Maruyama plumbed $4$-manifold as well as its companion introduced in the paper.

\begin{proof}[Proof of Theorem~\ref{maruyama} and Theorem~\ref{maruyama2}]

Following the recipe in \cite[Chapter 5.21]{EN85} again, it is straightforward to check that the determinant of the intersection matrix associated to the plumbing graph for $X'(n)$ is $- 1$ for each $n \geq 1$, so $\partial X'(n)$ is a homology sphere.

Again we use the dual approach for describing the additional integral surgeries from their plumbing graphs shown in Figure~\ref{fig:plumb}. For the case $n=1$, see the analogous proof of \cite[Theorem 1.2]{S20}. Assume that $n \geq 2$. In particular, we prove that there is a $(-1)$-surgery on a knot in $\partial X'(n)$ to $Y$ where $Y$ is $0$-surgery on the unknot. In Figure~\ref{fig:x1}, we draw the blow down sequences explicitly and we eventually reach the $3$-manifold $Y$. Hence, the rest of the proof is using Lemma~\ref{integral} and the fact that the blow down operation keeps the homeomorphism type of the boundary $3$-manifold the same. The proof for $\partial X(n)$ is quite similar to the previous one and it is clearly displayed in Figure~\ref{fig:x1}. For $n=1$, one can again consult the proof of \cite[Theorem~1.2]{S20}.

\begin{figure}[ht]
\centering
\includegraphics[width=0.95\columnwidth]{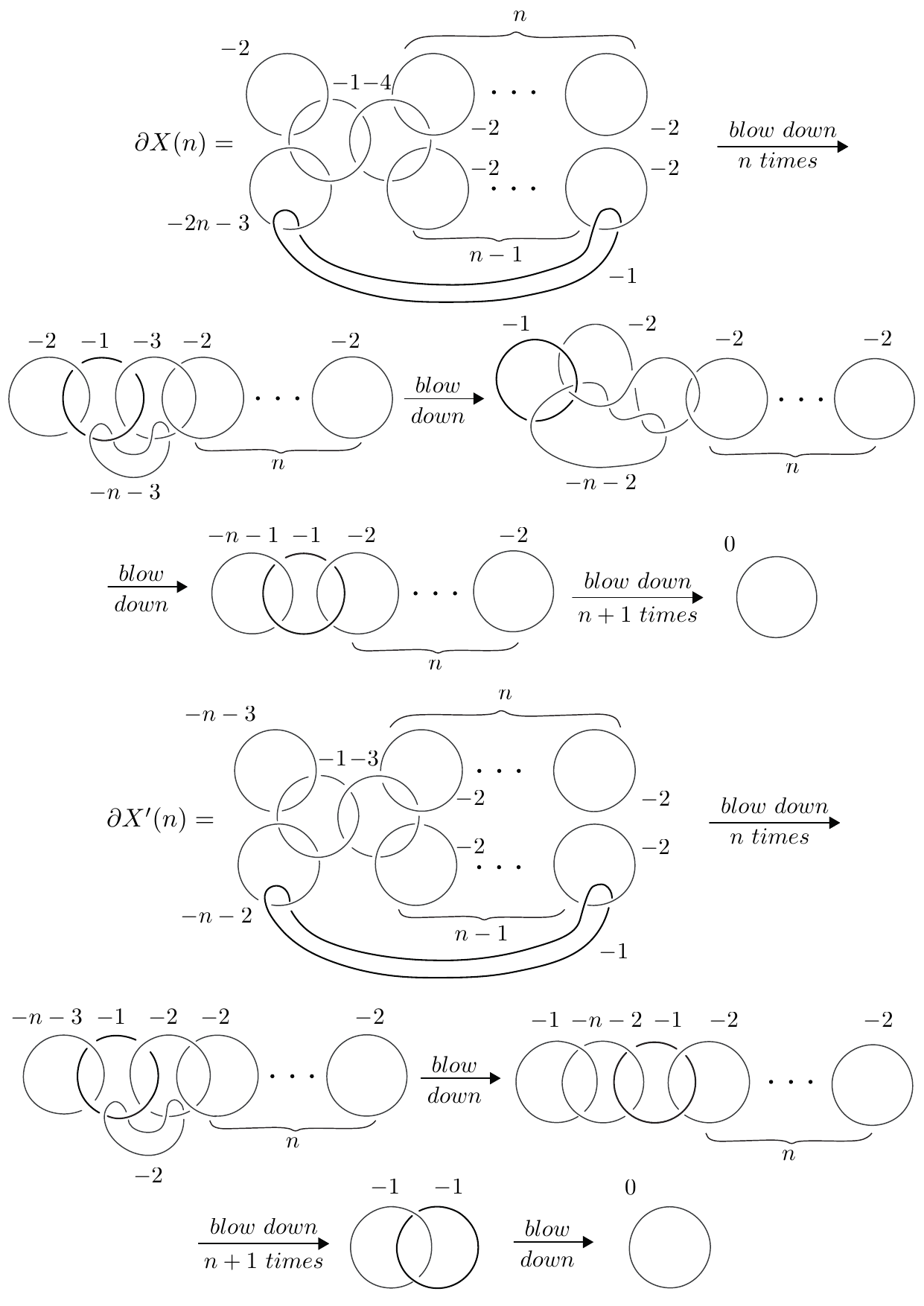}
\caption{$(-1)$-surgeries respectively from $\partial X(n)$ and $\partial X'(n)$ to $Y$.}
\label{fig:x1}
\end{figure}

\end{proof}

The discussion in the following remark is suggested by Marco Golla.

\begin{figure}[ht]
\centering
\includegraphics[width=0.32\columnwidth]{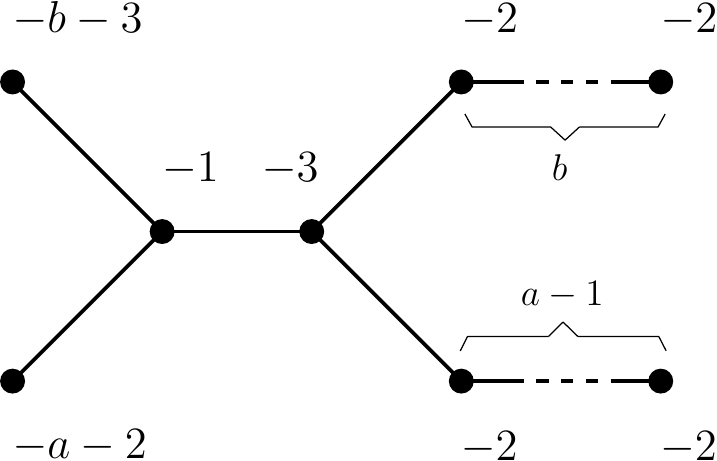}
\caption{The two-parameter plumbed manifold $X'(a,b)$.}
\label{fig:plumb5}
\vskip\baselineskip
\end{figure}

\begin{remark}
One may recognize that the upper part of the plumbing graph of $X'(n)$ behaves freely in the proof, so one might think that we obtain a two-parameter family of homology spheres, say $X'(a,b)$, displayed in Figure~\ref{fig:plumb5}. Then the determinant of its intersection matrix is $(-1)^{a-b-1}(a-b-1)^2$. Thus, its boundary $\partial X'(a,b)$ is a homology sphere if and only if $a=b$ or $a=b+2$. However, in this case, $X'(b,b)$ and $X'(b+2,b)$ are essentially the same as our initial $4$-manifold $X'(n)$.
\end{remark}

We finally reprove the classical results about Seifert fibered homology spheres.

\begin{proof}[Proof of Theorem~\ref{classical} and Theorem~\ref{classical2}]
Using the procedure in \cite[Example 1.17]{Sav02}, it can be simply shown that $\Sigma(2,3,13)$ and $\Sigma(2,3,25)$ are the boundary of the negative-definite unimodular plumbing graphs shown in Figure~\ref{fig:pl2}, respectively.

\begin{figure}[ht]
\centering
\includegraphics[width=0.7\columnwidth]{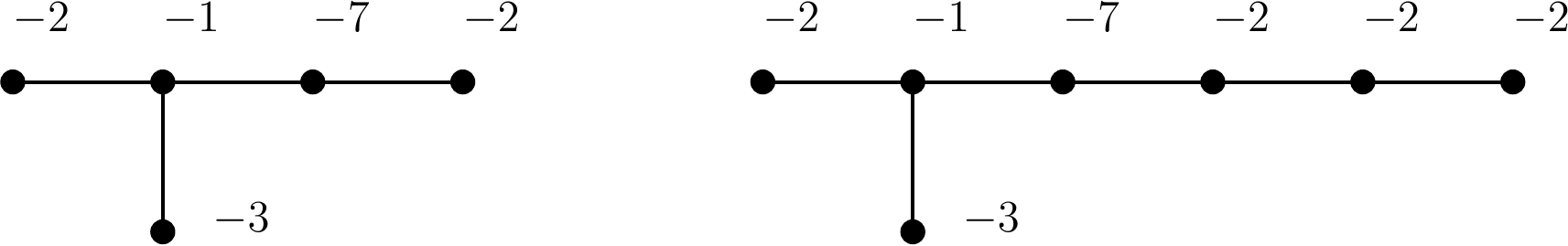}
\caption{The plumbing graphs of $\Sigma(2,3,13)$ and $\Sigma(2,3,25)$}
\label{fig:pl2}
\end{figure}

In Figure~\ref{fig:two}, we draw the additional $(-2)$-framed components in their surgery diagrams. Blowing down the $(-1)$-framed dark black components $5$- and $7$-times respectively, we ultimately reach the $3$-manifold $Y$ where $Y$ is $0$-surgery on the stevedore knot. Applying Lemma~\ref{integral}, we finish the proofs of these cases. The explicit procedures for blow down sequences are left to readers as exercises. The proofs for Stern's families listed in Theorem~\ref{classical2} are identical within \cite{S20}. One can consult the handle diagrams appearing in \cite[Theorem~1.2]{S20}. 

\begin{figure}[ht]
\vskip\baselineskip
\centering
\includegraphics[width=0.84\columnwidth]{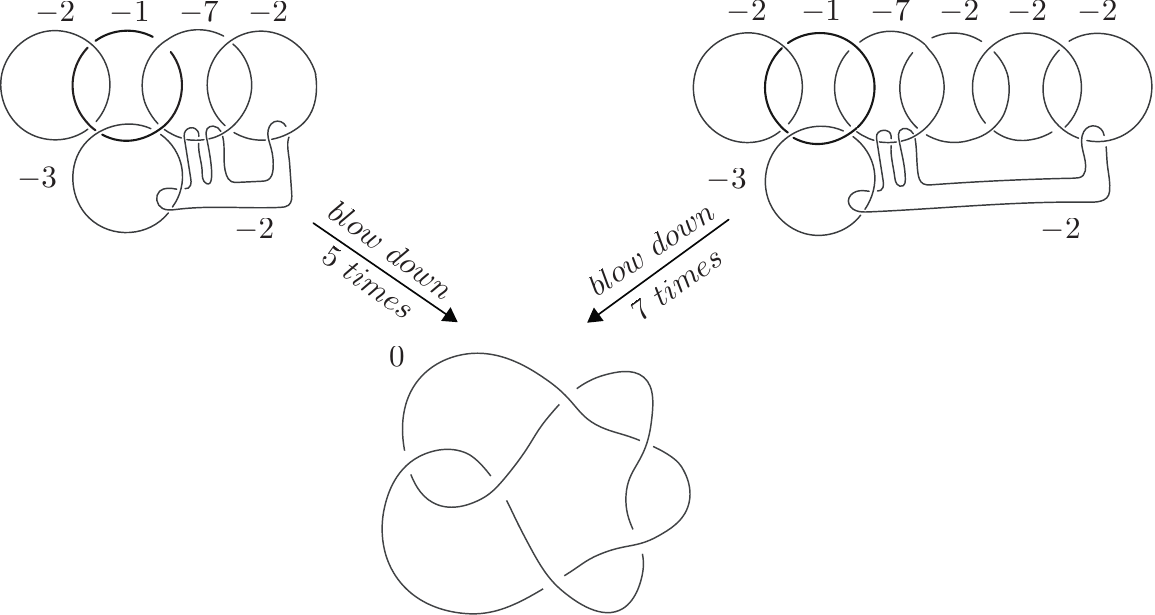}
\caption{$(-2)$-surgeries from $\Sigma(2,3,13)$ and $\Sigma(2,3,25)$ to $Y$.}
\label{fig:two}
\vskip\baselineskip
\end{figure}

Next, we consider families of Casson and Harer. For odd values of the integer $p$, recall that $\Sigma(p,ps+1,ps+2)$ and $\Sigma(p,ps-2,ps-1)$ are the boundaries of the negative-definite unimodular plumbing graphs displayed in Figure~\ref{fig:pl} from the left to the right \cite{CH81}. To complete the proof by applying Lemma~\ref{integral}, we similarly address the dual approach by giving integral surgeries from their plumbing graphs. In fact, we show that $\Sigma(p,ps+1,ps+2)$ and $\Sigma(p,ps-2,ps-1)$ are both obtained by $(-1)$-surgery on a knot in $Y$ where $Y$ is $0$-surgery on the unknot.

\begin{figure}[ht]
\centering
\includegraphics[width=0.75\columnwidth]{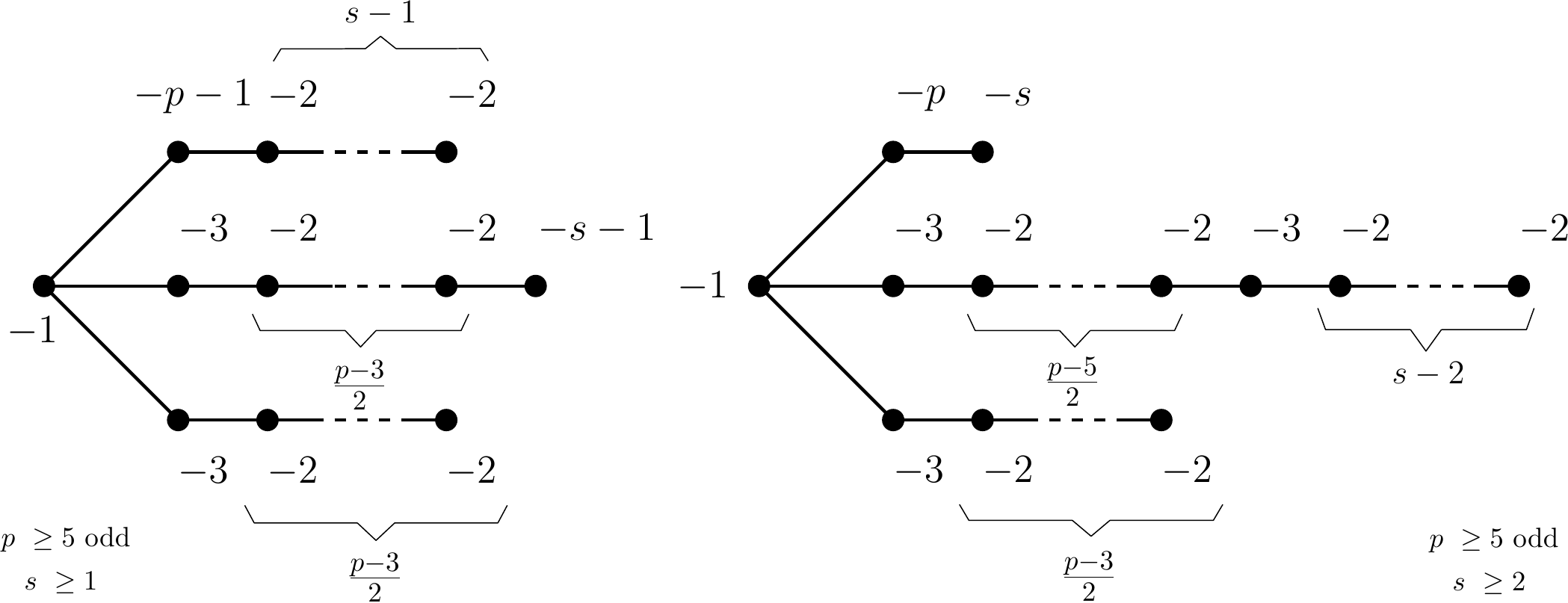}
\caption{The plumbing graphs of families of Casson and Harer.}
\label{fig:pl}
\vskip\baselineskip
\end{figure}

Assume $p=3$ for both families, so that we consider $\Sigma(3,3s+1,3s+2)$ and $\Sigma(3,3s-2,3s-1)$. The case $s=1$ for the second one results in $\Sigma(3,1,2) = S^3$. The remaining elements are essentially the same as the former family, and the proof of $\Sigma(3,3s+1,3s+2)$ is already in \cite{S20} with the same technique. Henceforth, we suppose that $p \geq 5$ odd and $s \geq 1$ for the former family, and $p \geq 5$ odd and $s \geq 2$ for the latter one. The surgery diagrams corresponding to the first elements of the plumbing graphs show in Figure~\ref{fig:f1} and Figure~\ref{fig:f2} respectively. Again the extra dark black $(-1)$-framed components indicate the required surgeries to $Y$ by following the blow down procedures. Then the whole families are obtained by applying the Akbulut-Larson trick successively.

\begin{figure}[ht]
\centering
\includegraphics[width=0.95\columnwidth]{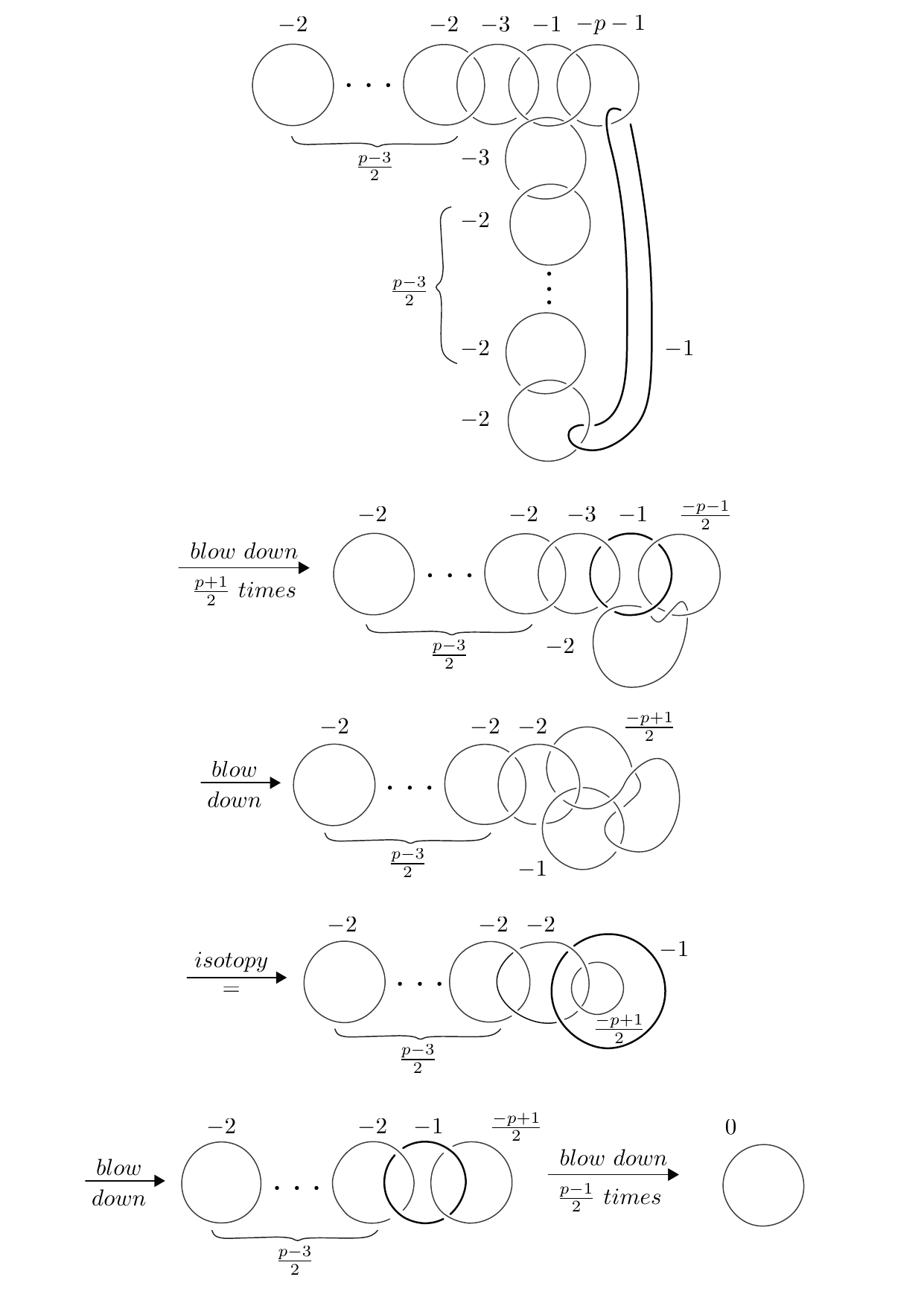}
\caption{$(-1)$-surgery from $\Sigma(p,p+1,p+2)$ to $Y$.}
\label{fig:f1}
\end{figure}

\begin{figure}[ht]
\centering
\includegraphics[width=0.95\columnwidth]{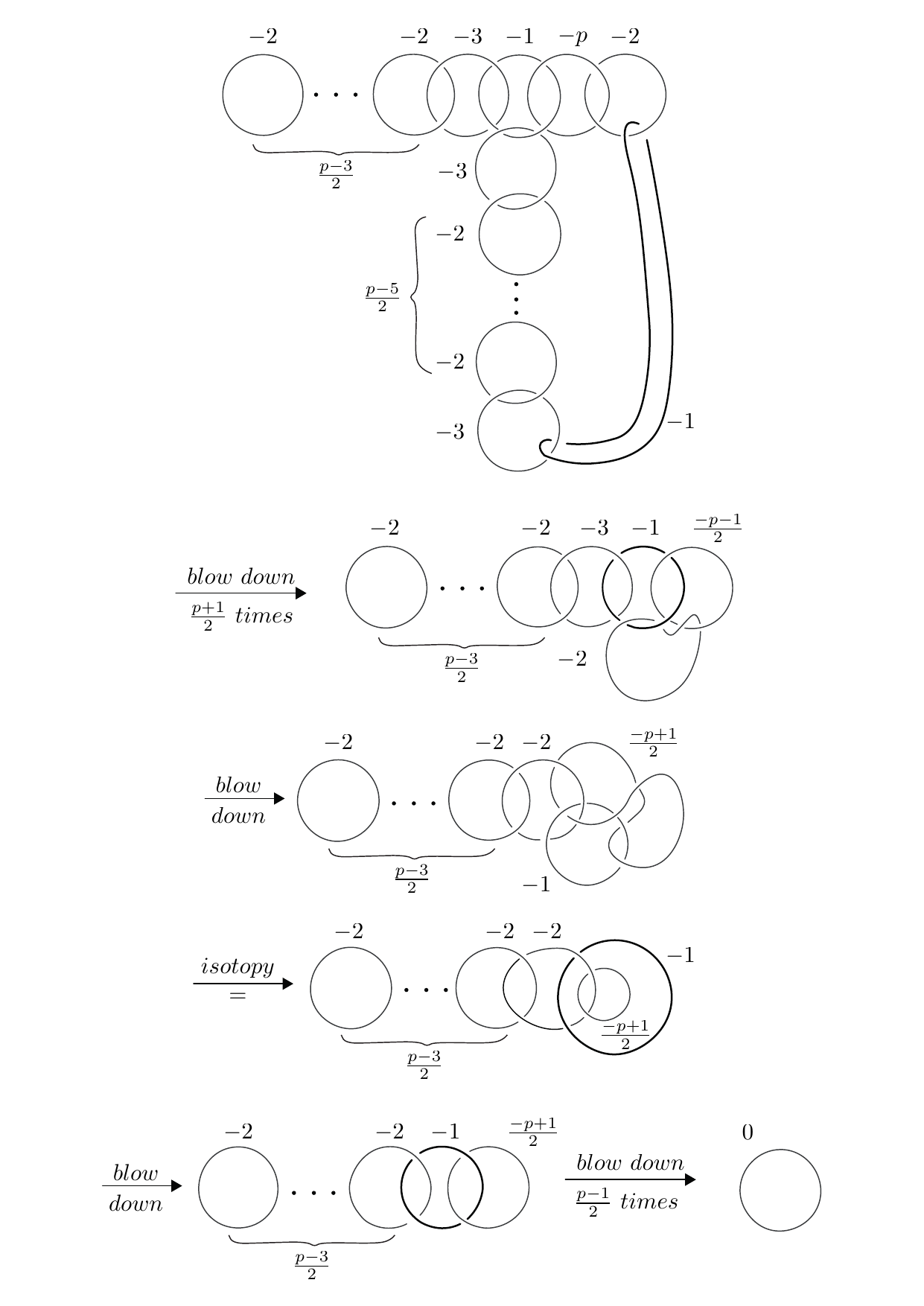}
\caption{$(-1)$-surgery from $\Sigma(p,2p-2,2p-1)$ to $Y$.}
\label{fig:f2}
\end{figure}

\end{proof}

\clearpage
\bibliography{plumbedhomspheres}

\providecommand{\bysame}{\leavevmode\hbox to3em{\hrulefill}\thinspace}
\providecommand{\MR}{\relax\ifhmode\unskip\space\fi MR }
\providecommand{\MRhref}[2]{%
  \href{http://www.ams.org/mathscinet-getitem?mr=#1}{#2}
}
\providecommand{\href}[2]{#2}
\begin{thebibliography}{DHST23}

\bibitem[AK79]{AK79}
Selman Akbulut and Robion Kirby, \emph{Mazur manifolds}, Michigan Math. J.
  \textbf{26} (1979), no.~3, 259--284.
  \MR{{\href{https://mathscinet.ams.org/mathscinet-getitem?mr=0544597}{0544597}}}

\bibitem[AK14]{AK14}
Selman Akbulut and \c{C}a\u{g}r{\i} Karakurt, \emph{Heegaard {F}loer homology
  of some {M}azur type manifolds}, Proc. Amer. Math. Soc. \textbf{142} (2014),
  no.~11, 4001–4013.
  \MR{{\href{https://mathscinet.ams.org/mathscinet-getitem?mr=3251740}{3251740}}}

\bibitem[Akb16]{A16}
Selman Akbulut, \emph{4-manifolds}, Oxford Graduate Texts in Mathematics,
  vol.~25, Oxford University Press, Oxford, 2016.
  \MR{{\href{https://mathscinet.ams.org/mathscinet-getitem?mr=3559604}{3559604}}}

\bibitem[AL18]{AL18}
Selman Akbulut and Kyle Larson, \emph{Brieskorn spheres bounding rational
  balls}, Proc. Amer. Math. Soc. \textbf{146} (2018), no.~4, 1817--1824.
  \MR{{\href{https://mathscinet.ams.org/mathscinet-getitem?mr=3754363}{3754363}}}

\bibitem[CG86]{CG86}
Andrew~J. Casson and Cameron~McA. Gordon, \emph{Cobordism of classical knots},
  {\`A} la recherche de la topologie perdue, Progr. Math., Birkh\"{a}user
  Boston, Boston, MA, With anappendix by P. M. Gilmer, \textbf{62} (1986),
  181--199. \MR{{\href{https://mathscinet.ams.org/mathscinet-getitem?mr=0900252
  }{0900252}}}

\bibitem[CH81]{CH81}
Andrew~J. Casson and John~L. Harer, \emph{Some homology lens spaces which bound
  rational homology balls}, Pacific J. Math. \textbf{96} (1981), no.~1, 23--36.
  \MR{{\href{https://mathscinet.ams.org/mathscinet-getitem?mr=0634760}{0634760}}}

\bibitem[DHST23]{DHST18}
Irving Dai, Jennifer Hom, Matthew Stoffregen, and Linh Truong, \emph{An
  infinite-rank summand of the homology cobordism group}, Duke Math. J.
  \textbf{172} (2023), no.~12, 2365--2432.
  \MR{{\href{https://mathscinet.ams.org/mathscinet-getitem?mr=4654053}{4654053}}}

\bibitem[EN85]{EN85}
David Eisenbud and Walter~D. Neumann, \emph{Three-dimensional link theory and
  invariants of plane curve singularities}, Annals of Mathematics Studies, vol.
  110, Princeton University Press, Princeton, NJ, 1985.
  \MR{{\href{https://mathscinet.ams.org/mathscinet-getitem?mr=0817982}{0817982}}}

\bibitem[Fic84]{F84}
Henry~C. Fickle, \emph{Knots, $\mathbb{Z}$-homology $3$-spheres and
  contractible $4$-manifolds}, Houston J. Math. \textbf{10} (1984), no.~4,
  467–493.
  \MR{{\href{https://mathscinet.ams.org/mathscinet-getitem?mr=774711}{774711}}}

\bibitem[Fre82]{Fre82}
Michael~Hartley Freedman, \emph{The topology of four-dimensional manifolds}, J.
  Differential Geometry \textbf{17} (1982), no.~3, 357–453.
  \MR{{\href{https://mathscinet.ams.org/mathscinet-getitem?mr=679066}{679066}}}

\bibitem[FS81]{FS81}
Ronald Fintushel and Ronald~J. Stern, \emph{An exotic free involution on
  {$S^{4}$}}, Ann. of Math. (2) \textbf{113} (1981), no.~2, 357--365.
  \MR{{\href{https://mathscinet.ams.org/mathscinet-getitem?mr=607896}{607896}}}

\bibitem[FS84]{FS84}
\bysame, \emph{A $\mu$-invariant one homology $3$-sphere that bounds an
  orientable rational ball}, Four-manifold theory (Durham, NH, 1982)
  \textbf{35} (1984), 265--268.
  \MR{{\href{https://mathscinet.ams.org/mathscinet-getitem?mr=0780582}{0780582}}}

\bibitem[FS85]{FS85}
\bysame, \emph{Pseudofree orbifolds}, Ann. of Math.(2) \textbf{122} (1985),
  no.~2, 335--364.
  \MR{{\href{https://mathscinet.ams.org/mathscinet-getitem?mr=0808222}{0808222}}}

\bibitem[FS87]{FS87}
\bysame, \emph{$\mathrm{O}(2)$ actions on the $5$-sphere}, Invent. Math.
  \textbf{87} (1987), no.~3, 457–476.
  \MR{{\href{https://mathscinet.ams.org/mathscinet-getitem?mr=874031}{874031}}}

\bibitem[Fuk78]{F78}
Shinji Fukuhara, \emph{On the invariant for a certain type of involutions of
  homology $3$-spheres and its application}, J. Math. Soc. Japan \textbf{30}
  (1978), no.~4, 653–665.
  \MR{{\href{https://mathscinet.ams.org/mathscinet-getitem?mr=513075}{513075}}}

\bibitem[Gor75]{Gor75}
C.~McA. Gordon, \emph{Knots, homology spheres, and contractible
  {$4$}-manifolds}, Topology \textbf{14} (1975), 151--172.
  \MR{{\href{https://mathscinet.ams.org/mathscinet-getitem?mr=402762}{402762}}}

\bibitem[GS99]{GS99}
Robert~E. Gompf and Andr\'{a}s~I. Stipsicz, \emph{{$4$}-manifolds and {K}irby
  calculus}, Graduate Studies in Mathematics, vol.~20, American Mathematical
  Society, Providence, RI, 1999.
  \MR{{\href{https://mathscinet.ams.org/mathscinet-getitem?mr=1707327}{1707327}}}

\bibitem[IM20]{IM18}
Ahmad Issa and Duncan McCoy, \emph{On {S}eifert fibered spaces bounding
  definite manifolds}, Pacific J. Math. \textbf{304} (2020), no.~2, 463–480.
  \MR{{\href{https://mathscinet.ams.org/mathscinet-getitem?mr=4062777}{4062777}}}

\bibitem[Kir78a]{K78}
Robion~C. Kirby, \emph{A calculus for framed links in ${S}^3$}, Invent. Math.
  \textbf{45} (1978), 35--56.
  \MR{{\href{https://mathscinet.ams.org/mathscinet-getitem?mr=0467753}{0467753}}}

\bibitem[Kir78b]{K78b}
\bysame, \emph{Problems in low dimensional manifold theory}, Algebraic and
  geometric topology ({P}roc. {S}ympos. {P}ure {M}ath., {S}tanford {U}niv.,
  {S}tanford, {C}alif., 1976), {P}art 2, Proc. Sympos. Pure Math., XXXII, Amer.
  Math. Soc., Providence, R.I., 1978, pp.~273--312.
  \MR{{\href{https://mathscinet.ams.org/mathscinet-getitem?mr=520548}{520548}}}

\bibitem[Kol08]{K08}
J\'{a}nos Koll\'{a}r, \emph{Is there a topological {B}ogomolov-{M}iyaoka-{Y}au
  inequality?}, Pure Appl. Math. Q. \textbf{4} (2008), no.~2, Special Issue: In
  honor of Fedor Bogomolov. Part 1, 203--236.
  \MR{{\href{https://mathscinet.ams.org/mathscinet-getitem?mr=2400877}{2400877}}}

\bibitem[K{\c{S}}22]{KS22}
\c{C}a{\u{g}}r{\i} Karakurt and O{\u{g}}uz {\c{S}}avk, \emph{Almost simple
  linear graphs, homology cobordism and connected {H}eegaard {F}loer homology},
  Acta Math. Hungar. \textbf{168} (2022), no.~2, 454--489.
  \MR{{\href{https://mathscinet.ams.org/mathscinet-getitem?mr=4527512}{4527512}}}

\bibitem[Man80]{M80}
Richard Mandelbaum, \emph{Four-dimensional topology: an introduction}, Bull.
  Amer. Math. Soc. (N.S.) \textbf{2} (1980), no.~1, 1–159.
  \MR{{\href{https://mathscinet.ams.org/mathscinet-getitem?mr=551752}{551752}}}

\bibitem[Man18]{M18}
Ciprian Manolescu, \emph{Homology cobordism and triangulations}, Proceedings of
  the {I}nternational {C}ongress of {M}athematicians---{R}io de {J}aneiro 2018.
  {V}ol. {II}. {I}nvited lectures, World Sci. Publ., Hackensack, NJ, 2018,
  pp.~1175--1191.
  \MR{{\href{https://mathscinet.ams.org/mathscinet-getitem?mr=3966804}{3966804}}}

\bibitem[Mar79]{M79}
Nigel Martin, \emph{Some homology $3$-spheres which bound acyclic
  $4$-manifolds}, Topology of low-dimensional manifolds (Proc. Second Sussex
  Conf., Chelwood Gate, 1977), Lecture Notes in Math., 722, Springer, Berlin,
  1979, p.~85–92.
  \MR{{\href{https://mathscinet.ams.org/mathscinet-getitem?mr=547457}{547457}}}

\bibitem[Mar81]{M81}
Noriko Maruyama, \emph{Notes on homology $3$-spheres which bound contractible
  $4$-manifolds. {I}}, J. Tsuda College \textbf{13} (1981), 19--31.
  \MR{{\href{https://mathscinet.ams.org/mathscinet-getitem?mr=635711}{635711}}}

\bibitem[Mar82]{M82}
\bysame, \emph{Notes on homology $3$-spheres which bound contractible
  $4$-manifolds. {II}}, J. Tsuda College \textbf{14} (1982), 7--24.
  \MR{{\href{https://mathscinet.ams.org/mathscinet-getitem?mr=662274}{662274}}}

\bibitem[Maz61]{M61}
Barry Mazur, \emph{A note on some contractible {$4$}-manifolds}, Ann. of Math.
  (2) \textbf{73} (1961), 221--228.
  \MR{{\href{https://mathscinet.ams.org/mathscinet-getitem?mr=125574}{125574}}}

\bibitem[Mil75]{Mil75}
John Milnor, \emph{On the {$3$}-dimensional {B}rieskorn manifolds
  {$M(p,q,r)$}}, Knots, groups, and {$3$}-manifolds ({P}apers dedicated to the
  memory of {R}. {H}. {F}ox), Ann. of Math. Stud., vol. No. 84, Princeton Univ.
  Press, Princeton, NJ, 1975, pp.~175--225.
  \MR{{\href{https://mathscinet.ams.org/mathscinet-getitem?mr=418127}{418127}}}

\bibitem[NZ85]{NZ85}
Walter~D. Neumann and Don Zagier, \emph{A note on an invariant of {F}intushel
  and {S}tern}, Geometry and topology (College Park, Md., 1983/84), Lecture
  Notes in Math., 1167, Springer, Berlin, 1985, pp.~241--244.
  \MR{{\href{https://mathscinet.ams.org/mathscinet-getitem?mr=827273}{827273}}}

\bibitem[OS03]{OS03}
Peter Ozsv\'ath and Zolt\'an Szab\'o, \emph{Absolutely graded {F}loer
  homologies and intersection form for four-manifolds with boundary}, Adv.
  Math. \textbf{173} (2003), no.~2, 179--261.
  \MR{{\href{https://mathscinet.ams.org/mathscinet-getitem?mr=1957829}{1957829}}}

\bibitem[Po{\'e}60]{P60}
Valentin Po{\'e}naru, \emph{Les decompositions de l'hypercube en produit
  topologique}, Bull. Soc. Math. France \textbf{88} (1960), 113--129.
  \MR{{\href{https://mathscinet.ams.org/mathscinet-getitem?mr=125572}{125572}}}

\bibitem[Ram71]{R71}
C.~P. Ramanujam, \emph{A topological characterisation of the affine plane as an
  algebraic variety}, Ann. of Math. (2) \textbf{94} (1971), 69–88.
  \MR{{\href{https://mathscinet.ams.org/mathscinet-getitem?mr=286801}{286801}}}

\bibitem[Rok52]{R52}
Vladimir~A. Rokhlin, \emph{New results in the theory of four-dimensional
  manifolds}, Doklady Akad. Nauk SSSR (N.S.) \textbf{84} (1952), 221--224.
  \MR{{\href{https://mathscinet.ams.org/mathscinet-getitem?mr=0052101}{0052101}}}

\bibitem[Sav02]{Sav02}
Nikolai Saveliev, \emph{Invariants for homology {$3$}-spheres}, Encyclopedia of
  Mathematical Sciences, Low-dimensional topology, vol. 140, Springer-Verlag,
  Berlin, Germany, 2002.
  \MR{\href{https://mathscinet.ams.org/mathscinet-getitem?mr=1941324}{1941324}}

\bibitem[{\c{S}}av20]{S20}
O{\u{g}}uz {\c{S}}avk, \emph{More {B}rieskorn spheres bounding rational balls},
  Topology Appl. \textbf{286} (2020), 107400, 10 pp.
  \MR{{\href{https://mathscinet.ams.org/mathscinet-getitem?mr=4179129}{4179129}}}

\bibitem[{\c{S}}av24]{S22}
\bysame, \emph{A survey of the homology cobordism group}, Bull. Amer. Math.
  Soc. (N.S.) \textbf{61} (2024), no.~1, 119--157.
  \MR{{\href{https://mathscinet.ams.org/mathscinet-getitem?mr=4678574}{4678574}}}

\bibitem[Sim21]{Sim20}
Jonathan Simone, \emph{Using rational homology circles to construct rational
  homology balls}, Topology Appl. \textbf{291} (2021), 107626, 16 pp.
  \MR{{\href{https://mathscinet.ams.org/mathscinet-getitem?mr=4215138}{4215138}}}

\bibitem[SS05]{SS05}
Paul Seidel and Ivan Smith, \emph{The symplectic topology of {R}amanujam's
  surface}, Comment. Math. Helv. \textbf{80} (2005), no.~4, 859–881.
  \MR{{\href{https://mathscinet.ams.org/mathscinet-getitem?mr=2182703}{2182703}}}

\bibitem[Ste78]{S78}
Ronald~J. Stern, \emph{Some more {B}rieskorn spheres which bound contractible
  manifolds}, Notices Amer. Math. Soc \textbf{25} (1978), Announcement,
  {\href{https://www.ams.org/journals/notices/197806/197806FullIssue.pdf}{A448}}.

\bibitem[Zai01]{Z98}
Mikhail Zaidenberg, \emph{Lectures on exotic algebraic structures on affine
  spaces}, Preprint (2001),
  \href{https://arxiv.org/abs/math/9801075}{arXiv:9801075}.

\end{thebibliography}
\bibliographystyle{amsalpha}

\end{document}